\long\def\delete#1{}
\definecolor{Blue}{rgb}{0,0,1}
\definecolor{Red}{rgb}{1,0,0}
\definecolor{DarkGreen}{rgb}{0,0.6,0}
\definecolor{DarkYellow}{rgb}{1,1,0.2}
\definecolor{DarkPurple}{rgb}{.6,0,1}
\def\ma{\mathscr{A}}
\def\mb{\mathscr{B}}
\def\mc{\mathscr{C}}
\def\me{\mathscr{E}}
\def\mf{\mathscr{F}}
\def\mh{\mathscr{H}}
\def\ml{\mathscr{L}}
\def\mm{\mathscr{M}}
\def\mp{\mathscr{P}}
\def\ms{\mathscr{S}}
\def\mw{\mathscr{W}}
\def\my{\mathscr{Y}}
\def\bs{\setminus}
\def\ff{\mathbb{F}_q}
\def\ge{\geqslant}
\def\le{\leqslant}
\def\b{\brack}
\def\kn{\mp_m}
\def\ro{\romannumeral}
\newtheorem{thm}{Thoerem}[section]
\newtheorem{lem}[thm]{Lemma}
\newtheorem{pr1}[thm]{Proposition}
\begin{document}
	\setcounter{page}{1}
	\renewcommand{\thefootnote}{}
	\newcommand{\remark}{\vspace{2ex}\noindent{\bf Remark.\quad}}
	\renewcommand{\abovewithdelims}[2]{%
		\genfrac{[}{]}{0pt}{}{#1}{#2}}

	%-------------------  First Head  -----------------------------------------
	
	\def\qed{\hfill$\Box$\vspace{11pt}}
	
	\title {\bf  Non-trivial $t$-intersecting families for symplectic polar spaces}

	\author{Tian Yao\thanks{E-mail: \texttt{yaotian@mail.bnu.edu.cn}}}
	\author{Benjian Lv\thanks{E-mail: \texttt{bjlv@bnu.edu.cn}}}
	\author{Kaishun Wang\thanks{Corresponding author. E-mail: \texttt{wangks@bnu.edu.cn}}}
	\affil{\small Sch. Math. Sci. {\rm \&} Lab. Math. Com. Sys.,
		Beijing Normal University, Beijing, 100875,  China}

	\date{}
	
	\openup 0.5\jot
	\maketitle

	\begin{abstract}
		
		Let $\mp$ be a symplectic polar space over a finite field $\ff$, and $\mp_m$ denote the set of all $m$-dimensional subspaces in $\mp$. We say a $t$-intersecting subfamily of $\mp_m$ is trivial if there exists a $t$-dimensional subspace contained in each member of this family. In this paper, we determine the structure of maximum sized non-trivial $t$-intersecting subfamilies of $\mp_m$.
		
		\vspace{2mm}
		
		\noindent{\bf Key words}\ \ Erd\H{o}s-Ko-Rado Theorem, symplectic polar space, non-trivial $t$-intersecting family
		
		\
		
		\noindent{\bf AMS classification:} \   05D05, 05A30, 51A50
		
		%\footnotetext{ E-mail address: caomengyu@mail.bnu.edu.cn(M.Cao), bjlv@bnu.edu.cn(B.Lv), wangks@bnu.edu.cn(K.Wang)}
		
	\end{abstract}
	
	\section{Introduction}
	
	Let $n$ and $k$ be two positive integers with $n\ge k$.
	For an $n$-element set $X$, denote the set of all $k$-subsets of $X$ by $\binom{X}{k}$.
	Given a positive integer $t$, we say a family $\mf\subset\binom{X}{k}$ is \emph{$t$-intersecting} if $|A\cap B|\ge t$ for any $A,B\in\mf$.
	%When $t=1$, we say $\mf$ is \emph{intersecting} instead of $1$-intersecting.
	A $t$-intersecting family is called \emph{trivial} if every element of this family contains a fixed $t$-subset of $X$.
	The famous Erd\H{o}s-Ko-Rado theorem \cite{EKR,Fn,Wn} states that a $t$-intersecting family with maximum size is trivial when $n>(t+1)(k-t+1)$.
	The structure of non-trivial $t$-intersecting families with maximum size was completely determined in 1996 \cite{SHMT,SHM}. Recently, some other maximal non-trivial $t$-intersecting families were described in \cite{LSET2,LSET3,LSET1}.
	
	%%%%%%%%%%  histroy for vector space   %%%%%%%%%%%%%

	Let $V$ be an $n$-dimensional vector space over the finite field $\ff$, where $q$ is a prime power, and ${V\b k}_q$ denote the family of all $k$-dimensional subspaces of $V$.
	We usually replace ``$k$-dimensional subspace" with ``$k$-subspace" for short.
	Let $\mf\subset{V\b k}_q$ be a family with $\dim(F_1\cap F_2)\ge t$ for any $F_1,F_2\in\mf$, which we say is \emph{$t$-intersecting}. In particular, $\mf$ is called \emph{trivial} if there exists a $t$-subspace contained in each element of $\mf$.
	The Erd\H{o}s-Ko-Rado Theorem for vector space \cite{VEKR2,VEKR1,VEKR3} shows that a $t$-intersecting subfamily of ${V\b k}_q$ with maximum size is trivial when $\dim V>2k$.
	The structure of maximum sized non-trivial $t$-intersecting subfamily of ${V\b k}_q$ was determined via the parameter ``$t$-covering number" \cite{VHM1,VHM2}.

	Let $f$ be a non-degenerate sesquilinear form defined on $V$. An $m$-subspace $M$ of $V$ is called \emph{totally isotropic} if $f(x,y)=0$ holds for any $x,y\in M$. Denote the set of all totally isotropic subspaces of $V$ with respect to $f$ by $\mp$. Equipped with the inclusion relation, $\mp$ is a \emph{classical polar space}, denoted by the same symbol $\mp$. There are six kinds of classical polar spaces \cite{CPS}. In this paper, we consider the case that $\mp$ is a \emph{symplectic polar space}, where $n=2\nu$ and $f$ is an alternating bilinear form.
	
	For $0\le m\le\nu$, denote the set of all $m$-subspaces in $\mp$ by $\mp_m$. Let $\mf$ be a subfamily of $\mp_m$. If $\dim(F_1\cap F_2)\ge t$ for any $F_1, F_2\in\mf$, we say $\mf$ is $t$-\emph{intersecting}.
	%We usually omit $t$ if $t=1$.
	If there exists a fixed $t$-subspace contained in each member of $\mf$, we say $\mf$ is \emph{trivial}.
	The maximum sized $t$-intersecting subfamily of $\mp_m$ was proven to be trivial if $\nu\ge m$ for $t=1$ \cite{SPEKR2,SPEKR1}, or $2\nu\ge3m+1$ with $(2\nu,q)\neq(3m+1,2)$ for $t\ge2$ \cite{SPEKRT}.

	For a subspace $A$ of $V$, let $\mm(m;A)$ denote the set of all members of $\mp_m$ contained in $A$. For $T\in\mp_t$ and $U\in\mp_m$ with $\dim(T\cap U)=m-1$, write
	$$\mh_1(T,U)=\{F\in\mp_m: T\subset F, \dim(F\cap U)\ge t\}\cup\mm(m;T+U).$$
	For $Z\in{V\b t+2}$, write
	$$\mh_2(Z)=\{F\in\mp_m: \dim(F\cap Z)\ge t+1\}.$$

	Our main result states the structure of non-trivial $t$-intersecting subfamilies of $\mp_m$ with maximum size under certain conditions.

	\begin{thm}\label{SSHMT}
		Let $\nu,m$ and $t$ be integers such that $\nu\ge m+1=t+2$, or $2\nu\ge\max\{3m+2,2m+2t+6\}$ with $m\ge t+2$.
		Suppose $\mf$ is a maximum sized non-trivial $t$-intersecting subfamily of $\mp_m$.
		Then one of the following holds.
		\begin{itemize}
			\item[\rm{(\ro1)}] If $1\le t\le\frac{m}{2}-1$, then $\mf=\mh_1(T,U)$ for some $T\in\mp_t$ and $U\in\mp_m$ with $T+U\in\mp_{m+1}$.
			\item[\rm{(\ro2)}] If $\frac{m}{2}-1<t\le m-1$, then $\mf=\mh_2(Z)$ for some $Z\in\mp_{t+2}$,
			or $\mf=\mh_1(T,U)$ for some $T\in\mp_1$ and $U\in\mp_3$ with $T+U\in\mp_{4}$ if $(m,t)=(3,1)$.
		\end{itemize}
	\end{thm}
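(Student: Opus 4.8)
The plan is to mimic the Hilton–Milner–type strategy that works for vector spaces (as in \cite{VHM1,VHM2}), adapted to the polar-space setting, and to exploit the EKR-type stability results for $\mp_m$ that are quoted in the introduction. First I would introduce the \emph{$t$-covering number} $\tau_t(\mf)$, the minimum dimension of a subspace meeting every member of $\mf$ in dimension at least $t$; a family is trivial iff $\tau_t(\mf)=t$, so for a non-trivial family we have $\tau_t(\mf)\ge t+1$. The two candidate families $\mh_1(T,U)$ and $\mh_2(Z)$ have $t$-covering number $t+1$ and $t+2$ respectively, so the first main step is a case split on whether $\tau_t(\mf)=t+1$ or $\tau_t(\mf)\ge t+2$, and in the latter case one expects $\tau_t(\mf)=t+2$ to be forced by a counting bound.

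Second, I would prove the two structural halves. When $\tau_t(\mf)=t+1$: fix a $(t+1)$-subspace $W$ meeting every member of $\mf$ in dimension $\ge t$; then $\mf$ is partitioned according to which $t$-subspace of $W$ is contained in $F\cap W$ (for members containing some $t$-subspace of $W$) together with members $F$ with $F\cap W$ of dimension exactly $t$ but no fixed $t$-space working globally. A double-counting / shifting argument, together with the EKR bound for $\mp_{m}$ applied inside residues (links of isotropic subspaces in $\mp$ are again symplectic polar spaces of smaller rank, by the standard quotient construction), should pin down $\mf$ as a subfamily of some $\mh_1(T,U)$, and a maximality argument then gives equality. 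The numerical hypotheses $2\nu\ge\max\{3m+2,2m+2t+6\}$ are exactly what is needed to make these residual EKR bounds and the comparison $|\mh_1(T,U)|>|\text{competitor}|$ go through; the split at $t=\frac{m}{2}-1$ reflects when $\mh_1$ overtakes $\mh_2$ in size. When $\tau_t(\mf)\ge t+2$: a more delicate counting argument shows every member must meet a common $(t+2)$-subspace $Z$ in dimension $\ge t+1$, that $Z$ itself must be isotropic (otherwise $\mm(m;Z)$-type obstructions shrink the family below the trivial bound), and hence $\mf\subseteq\mh_2(Z)$ with equality by maximality; the exceptional $(m,t)=(3,1)$ case is where $\mh_1$ with $T\in\mp_1$, $U\in\mp_3$ competes and must be listed separately.

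Third, throughout I would need the exact cardinalities of $\mm(m;A)$ for isotropic and non-isotropic $A$, of $\mh_1(T,U)$, and of $\mh_2(Z)$, expressed via Gaussian binomials and the counting function for totally isotropic subspaces of a symplectic space; these are routine but must be assembled carefully to drive the inequalities. The comparison arguments are essentially: (trivial family size) $=|\{F\in\mp_m:T_0\subseteq F\}|$ for a fixed $T_0\in\mp_t$, and one shows any non-trivial $\mf$ not of the stated form has strictly smaller size.

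The main obstacle I anticipate is the case $\tau_t(\mf)\ge t+2$: ruling out sporadic large non-trivial families that are neither $\mh_1$ nor $\mh_2$ requires tight control of how members of $\mp_m$ distribute across the $t$-subspaces of a fixed $(t+1)$- or $(t+2)$-subspace, and the symplectic form constrains which subspaces are isotropic in a way that complicates the pure-vector-space counting. Handling the boundary regime $2\nu$ close to $3m+2$ (and the excluded small-$q$ exceptions implicit in the quoted EKR theorem) will also demand care, since several competing families have nearly equal size there; the $(m,t)=(3,1)$ carve-out is the visible symptom of this, and I would expect a handful of further low-dimensional cases to need direct verification.
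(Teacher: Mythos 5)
Your overall framework (covering number $\tau_t$, case split, size comparisons between $\mh_1$ and $\mh_2$) points in the right direction, but the central structural claim on which your case analysis rests is false, and this breaks the proposed decomposition. You assert that $\mh_2(Z)$ has $t$-covering number $t+2$; in fact $\tau_t(\mh_2(Z))=t+1$. Indeed, for any $(t+1)$-subspace $S\subset Z$ and any $F\in\mh_2(Z)$ one has $\dim(F\cap S)\ge\dim(F\cap Z)+\dim S-\dim Z\ge t$, so every $(t+1)$-subspace of $Z$ is a $t$-cover. Consequently your plan for the case $\tau_t(\mf)\ge t+2$ --- ``show every member meets a common $Z\in\mp_{t+2}$ in dimension $\ge t+1$, hence $\mf\subseteq\mh_2(Z)$'' --- is logically impossible: any subfamily of $\mh_2(Z)$ has a $(t+1)$-dimensional $t$-cover, contradicting $\tau_t(\mf)\ge t+2$. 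In the paper this case is disposed of quite differently: one proves the pure size bound $|\mf|<f_0(\nu,m,t)$ (Proposition \ref{t+2}), where $f_0(\nu,m,t)$ is below both $h_1(\nu,m,t)$ and $h_2(\nu,m,t)$, so families with $\tau_t(\mf)\ge t+2$ are never extremal and contribute nothing to the classification.

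Symmetrically, your treatment of the case $\tau_t(\mf)=t+1$ only aims at $\mh_1(T,U)$, but $\mh_2(Z)$ must also be recovered inside this case. The paper does this by a secondary analysis: it considers the family $\ms$ of all $(t+1)$-dimensional $t$-covers of $\mf$, proves $\ms$ is itself $t$-intersecting (Lemma \ref{XJ}), and splits on whether $\ms$ is trivial ($\tau_t(\ms)=t$, leading to $\mh_1(T,U)$ when $\dim\sum_{S\in\ms}S=m+1$, and to a sub-extremal bound otherwise) or non-trivial ($\tau_t(\ms)=t+1$), in which case the classification of non-trivial $t$-intersecting families of $(t+1)$-spaces from \cite{T1} yields a $(t+2)$-space $Z$ with $\ms\subset{Z\b t+1}$ and then $\mf=\mh_2(Z)$, with a further size comparison ruling out $Z$ of type $(t+2,1)$. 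Your sketch contains no mechanism producing this dichotomy, and without it the case $\frac m2-1<t\le m-1$ of the theorem cannot be reached. (The boundary case $m=t+1$, $\nu\ge m+1$, which your proposal does not address, is also handled by the quoted classification of $t$-intersecting families of $(t+1)$-spaces rather than by the covering-number machinery.) Beyond this, the proposal leaves all quantitative steps --- the inequalities $h_1$ versus $h_2$ governed by the threshold $t=\frac m2-1$, the bound $f_0$, and the covering-reduction lemma (Lemma \ref{FS}) --- as unexecuted intentions, so even after repairing the case structure substantial work remains.
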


	\section{Inequalities for sizes of two $t$-intersecting families}

	In this section, we always auusme that $m\ge t+2$. We prove bounds for the families $\mh_1(T,U)$ for $T\in\mp_t$ and $U\in\mp$ with $\dim(T\cap U)=t-1$. Then we compare the size of $\mh_1(T,U)$ with the size of $\mh_2(Z)$ under the condition that $T+U\in\mp_{m+1}$ and $Z\in\mp_{t+2}$.

	For positive integers $a$ and $b$ with $a\ge b$, define the \emph{Gaussian binomial coefficient} by
	$${a\b b}_q:=\prod_{0\le i<b}\dfrac{q^{a-i}-1}{q^{b-i}-1},$$
	and set ${a\b0}_q=1$.
	It is routine to check that the size of ${V\b k}_q$ is ${n\b k}_q$.
	From now on, we will omit the subscript $q$. We present some properties for the Gaussian binomial coefficient and the symplectic polar space, which will be used throughout our proof.
	
	For positive integers $m,i$ with $m>i$, we have
	\begin{equation}\label{FFS}
		q^{m-i}<\frac{q^m-1}{q^i-1}<q^{m-i+1},\quad q^{i(m-i)}\le{m\b i}<q^{i(m-i+1)}.
	\end{equation}
	For positives $m_1,m$ with $m_1\le m\le\nu$, let $N(\nu,m,m_1)$ be the number of members of $\mp_{m}$ containing a fixed member of $\mp_{m_1}$. From \cite[Theorem 3.38]{JS}, we derive that
	\begin{equation}\label{JS}
		N(\nu,m,m_1)=\prod\limits_{i=1}^{m-m_1}\dfrac{q^{2(\nu-m+i)}-1}{q^{i}-1}.
	\end{equation}
	For $1\le m\le \nu-1$, let $Q$ be an $m$-subspace of $V$ and $\alpha_1,\dots,\alpha_m$ a basis of $Q$. Note that the rank of matrix $(f(\alpha_i,\alpha_j))_{m\times m}$ does not depend on the choose of the basis. It is routine to check that its rank is $2s$ for some non-negative integer $s$. Then we say $Q$ has \emph{type $(m,s)$}. Note that $Q$ has type $(m,0)$ if and only if $Q$ is totally isotropic. By \cite[Theorem 3.27]{JS}, the number of members of $\mp_m$ contained in a fixed $(m+1,1)$-type subspace is $q+1$.

	For $\mf\subset\mp_m$ and a subspace $A$ of $V$, let $\mf_A$ denote the set of members of $\mf$ containing $A$.  Write
	$$f_0(\nu,m,t)={m-t+1\b1}N(\nu,m,t+1)-q{m-t+1\b2}N(\nu,m,t+2).$$

	\begin{lem}\label{PC}
		Let $\nu,m,t$ be positive integers with $m\ge t+2$ and $2\nu\ge3m-t+3$.
		Suppose $T\in\mp_t$ and $U\in\mp_m$ with $\dim(T\cap U)=t-1$.  Then the following hold.
		\begin{itemize}
			\item[\rm{(\ro1)}] If $T+U\in\mp_{m+1}$, then $|\mh_1(T,U)|>f_0(\nu,m,t)$.
			\item[\rm{(\ro2)}] If $T+U\not\in\mp_{m+1}$, then $|\mh_1(T,U)|<f_0(\nu,m,t)$.
		\end{itemize}
	\end{lem}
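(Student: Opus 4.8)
\quad
The plan is first to put $\mh_1(T,U)$ into a shape that can be counted. Since $\dim(T\cap U)=t-1$ we have $\dim(T+U)=m+1$, so $T+U\in\mp_{m+1}$ exactly when $T+U$ is totally isotropic. For any $F\in\mp_m$ with $T\subseteq F$ one checks directly that $F\cap(T+U)=T+(F\cap U)$, hence $\dim(F\cap(T+U))=\dim(F\cap U)+1$; consequently $\mh_1(T,U)=A\cup B$ with
$$A=\{F\in\mp_m:T\subseteq F,\ \dim(F\cap(T+U))\ge t+1\},\qquad B=\mm(m;T+U),$$
and $A=\bigcup_Y\{F\in\mp_m:Y\subseteq F\}$, the union being over all totally isotropic $(t+1)$-subspaces $Y$ with $T\subseteq Y\subseteq T+U$, each of these sets having size $N(\nu,m,t+1)$ by \eqref{JS}. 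So everything reduces to comparing $|A\cup B|$ with $f_0(\nu,m,t)$, and the two cases will diverge in how many $Y$ occur and in $|B|$.

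For part (i), suppose $T+U$ is totally isotropic. Then every $(t+1)$-subspace $Y$ with $T\subseteq Y\subseteq T+U$ is admissible (there are ${m-t+1\b 1}$ of them, one for each $1$-subspace of $(T+U)/T$), and more generally the preimage in $T+U$ of any $j$-subspace of $(T+U)/T$ is a totally isotropic $(t+j)$-subspace through $T$, lying in $N(\nu,m,t+j)$ members of $\mp_m$ (and in none when $j=m-t+1$). Inclusion--exclusion applied to $A=\bigcup_Y\{F:Y\subseteq F\}$, with the subsets grouped by the span of the $1$-subspaces they select — the alternating sum over the subsets with a prescribed $j$-dimensional span being $(-1)^{j+1}q^{\binom{j}{2}}$, a standard identity for the subspace lattice — then gives
$$|A|=\sum_{j=1}^{m-t}(-1)^{j+1}q^{\binom{j}{2}}{m-t+1\b j}N(\nu,m,t+j).$$
Since $f_0(\nu,m,t)$ is exactly the $j=1$ and $j=2$ part of this sum, I would finish by showing the tail $\sum_{j\ge3}$ is $\ge0$: from \eqref{FFS}, \eqref{JS} and the hypothesis $2\nu\ge3m-t+3$ (which forces $\nu\ge m+3$) one checks that the successive summands strictly decrease in absolute value, so the tail is a nonnegative alternating sum and $|A|\ge f_0(\nu,m,t)$. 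Finally $t\ge1$ ensures some $m$-subspace of $T+U$ avoids $T$, so $B\setminus A\neq\emptyset$ and $|\mh_1(T,U)|=|A|+|B\setminus A|>f_0(\nu,m,t)$.

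For part (ii), $T+U$ has type $(m+1,1)$; let $R$ be its radical, so $\dim R=m-1$ and $R$ is totally isotropic. As $(T+U)/R$ is non-degenerate of dimension $2$, any totally isotropic subspace of $T+U$ projects into a subspace of $(T+U)/R$ of dimension at most $1$; combining this with dimension counts I would deduce $R\subseteq U$, $T\cap R=T\cap U$ (of dimension $t-1$), that $T+R$ is a totally isotropic $m$-subspace, and $(T+U)\cap T^\perp=T+R$. Then any totally isotropic $(t+1)$-subspace $Y$ with $T\subseteq Y\subseteq T+U$ lies in $T^\perp$, hence in $(T+U)\cap T^\perp=T+R$, so there are only ${m-t\b 1}$ of them; the union bound gives $|A|\le{m-t\b 1}N(\nu,m,t+1)$, while $|B|=q+1$ by \cite[Theorem 3.27]{JS}. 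Since ${m-t+1\b 1}-{m-t\b 1}=q^{m-t}$, the bound $|\mh_1(T,U)|\le|A|+|B|<f_0(\nu,m,t)$ would reduce to
$$q^{m-t}N(\nu,m,t+1)>q{m-t+1\b 2}N(\nu,m,t+2)+q+1,$$
which \eqref{FFS}, \eqref{JS} and $2\nu\ge3m-t+3$ make routine.

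The hard part will be the geometry in part (ii): showing that the isotropy of the members of $\mp_m$ confines the generating $(t+1)$-subspaces to the single $m$-subspace $T+R$, which is precisely what forces $|A|$ below the value $f_0(\nu,m,t)$ attained in part (i). Setting up the M\"obius inclusion--exclusion of part (i) and carrying out the numerical estimates should be straightforward given \eqref{FFS}, \eqref{JS} and the hypothesis on $\nu$.
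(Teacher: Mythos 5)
Your proposal is correct, and while its skeleton matches the paper's (control the part of $\mh_1(T,U)$ through $T$ via the $(t+1)$-subspaces between $T$ and $T+U$, handle $\mm(m;T+U)$ separately, then compare with $f_0(\nu,m,t)$ numerically), both halves are executed differently. For (\ro1) the paper does not compute $|A|$ exactly: it double-counts the incidences of $F\supseteq T$ with the $(t+1)$- and $(t+2)$-subspaces between $T$ and $T+U$ and uses ${i\b1}<q{i\b2}$ for $i\ge3$ to get $f_0(\nu,m,t)\le|\{F\in\mp_m:T\subset F,\ t+1\le\dim(F\cap(T+U))\le t+2\}|$, an argument needing no estimate involving $\nu$; your route instead evaluates $|A|$ exactly by the cross-cut/M\"obius identity (equivalent to $\sum_{j}(-1)^jq^{\binom{j}{2}}{i\b j}=0$), and your alternating tail is indeed nonnegative since the term ratio is at most $q^{2m+2-2\nu}<1$ under $2\nu\ge3m-t+3$; note that for $m=t+2$ the tail is empty and $|A|=f_0(\nu,m,t)$ exactly, so strictness genuinely rests on $B\setminus A\neq\emptyset$, which you correctly supply from $t\ge1$. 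For (\ro2) the paper merely discards the single non-isotropic space $\la T,\gamma\ra$, bounding the admissible $(t+1)$-spaces by ${m-t+1\b1}-1=q{m-t\b1}$, whereas your radical argument (all the facts you list --- $R\subseteq U$, $T\cap R=T\cap U$, $T+R=(T+U)\cap T^\perp$ totally isotropic of dimension $m$ --- do hold) confines them to $T+R$ and yields the sharper count ${m-t\b1}$; your reduced inequality $q^{m-t}N(\nu,m,t+1)>q{m-t+1\b2}N(\nu,m,t+2)+q+1$ then follows from (\ref{FFS}), (\ref{JS}) and $2\nu\ge3m-t+3$ (which forces $2\nu\ge2m+5$), so the numerics are even slacker than the paper's. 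In short, your version buys an exact formula in (\ro1) and a stronger geometric bound in (\ro2), at the price of invoking the hypothesis on $\nu$ already in (\ro1), where the paper's double-count avoids it.
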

	\begin{proof}
		Observe that $\dim(T+U)=m+1$.

		(\ro1)	Write all $(t+1)$-subspaces of $T+U$ containing $T$ as $W_1,\dots,W_{{m-t+1\b1}}$.
		For each $i=1,\dots,{m-t+1\b1}$, let $\mw_i$ denote the set of all $m$-subspaces in $\mp$ containing $W_i$.
		For $F\in\mp_m$ with $T\subset F$, if $\dim(F\cap(T+U))=l\ge t+1$, it appears ${l-t\b1}$ times in $\mw_1,\dots,\mw_{{m-t+1\b 1}}$. For each $i=1,2,\dots,m-t$, write $\me_i=\{F\in\mp_m: T\subset F,\ \dim(F\cap(T+U))=t+i\}$.
		Then we have
		\begin{equation*}
			\begin{aligned}
				{m-t+1\b1}N(\nu,m,t+1)&=\sum_{i=1}^{m-t+1\b1}|\mw_i|=\sum_{i=1}^{m-t}{i\b 1}|\me_i|.	
			\end{aligned}
		\end{equation*}
		Similarly, we have
		$$q{m-t+1\b2}N(\nu,m,t+2)=\sum_{i=2}^{m-t}q{i\b 2}|\me_i|.$$
		From ${i\b1}<q{i\b2}$ for $i\ge3$, we obtain
		\begin{equation}\label{PC2}
			\begin{aligned}
				f_0(\nu,m,t)\le\left|\left\{F\in\mp_m: T\subset F,\ t+1\le\dim(F\cap(T+U))\le t+2\right\}\right|.
			\end{aligned}
		\end{equation}
		Then (\ro1) follows from (\ref{PC2}) and the definition of $\mh_1(T,U)$.

		(\ro2) Since $T\in\mp_t$, $U\in\mp_m$ and $T+U\not\in\mp_{m+1}$, there exist $\alpha\in T\bs U$ and $\gamma\in U\bs T$ such that $f(\alpha,\gamma)\neq0$.
		Therefore,
		$$|\{S\in\mm(t+1;T+U): T\subset S\}|\le\left|\left\{S\in{T+U\b t+1}: T\subset S\right\}\bs\{\langle T,\gamma\rangle\}\right|={m-t+1\b1}-1.$$
		Notice that
		$$\{F\in\mp_m: T\subset F, \dim(F\cap(T+U))\ge t+1\}\subset\bigcup_{S\in\mm(t+1;T+U),\ T\subset S}\left(\mh_1(T,U)\right)_S$$
		and $T+U$ has type $(m+1,1)$. Then we have
		$$|\mh_1(T,U)|\le\left({m-t+1\b1}-1\right)N(\nu,m,t+1)+(q+1).$$
		By (\ref{FFS}), we get
		\begin{equation*}
			\begin{aligned}
				\dfrac{|\mh_1(T,U)|-f_0(\nu,m,t)}{N(\nu,m,t+2)}&\le-\dfrac{q^{2(\nu-t-1)}-1}{q^{m-t-1}-1}+q{m-t+1\b2}+\dfrac{q+1}{N(\nu,m,t+2)}\\
				&<-q^{2\nu-m-t-1}+q^{2(m-t)+1}+q+1\\
				&<0,
			\end{aligned}
		\end{equation*}
		as desired.
	\end{proof}

	Let $T\in\mp_t$, $U\in\mp_m$ with $T+U\in\mp_{m+1}$, and $Z\in\mp_{t+2}$. From \cite[Lemma 3.11]{JS}, we derive that $|\mh_1(T,U)|$ and $|\mh_2(Z)|$ do not depend on the choice of $T$, $U$ and $Z$. Write
	$$h_1(\nu,m,t)=|\mh_1(T,U)|,\quad h_2(\nu,m,t)=|\mh_2(Z)|.$$
	Notice that
	$$h_2(\nu,m,t)={t+2\b1}N(\nu,m,t+1)-q{t+1\b1}N(\nu,m,t+2).$$

	\begin{lem}\label{C2}%\rm{(\cite{VHM2})}
		Let $\nu,m$ and $t$ be positive integers with $m\ge t+2$ and $2\nu\ge 3m+1$. Then the following hold.
		\begin{itemize}
			\item[\rm{(\ro1)}] If $1\le t\le\dfrac{m}{2}-1$, then $h_2(\nu,m,t)<h_1(\nu,m,t)$.
			\item[\rm({\ro2)}] If $\dfrac{m}{2}-1< t\le m-2$, then $h_2(\nu,m,t)\ge h_1(\nu,m,t)$ and equality holds if and only if $(m,t)=(3,1)$.
		\end{itemize}
	\end{lem}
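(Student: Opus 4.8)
\medskip
\noindent\textbf{Proof strategy.}
The plan is to derive a closed form for $h_1(\nu,m,t)$ and then compare it with $h_2(\nu,m,t)$, exploiting that when $2\nu\ge 3m+1$ the numbers $N(\nu,m,t+k)$ shrink geometrically in $k$. Fix $T\in\mp_t$ and $U\in\mp_m$ with $T+U\in\mp_{m+1}$. For $F\in\mp_m$ with $T\subseteq F$ one has $(F\cap(T+U))+U=T+U$, hence $\dim(F\cap U)=\dim(F\cap(T+U))-1$, so $\{F\in\mp_m: T\subseteq F,\ \dim(F\cap U)\ge t\}=\ma$, where $\ma:=\{F\in\mp_m: T\subseteq F,\ \dim(F\cap(T+U))\ge t+1\}$. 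As $|\mm(m;T+U)|={m+1\b1}$ and $\ma\cap\mm(m;T+U)$ consists of the ${m-t+1\b1}$ members of $\mp_m$ lying in $T+U$ and containing $T$, we get $h_1(\nu,m,t)=|\ma|+{m+1\b1}-{m-t+1\b1}$. Passing to the symplectic space $T^\perp/T$ of dimension $2(\nu-t)$, members of $\ma$ correspond to the totally isotropic $(m-t)$-subspaces meeting the fixed totally isotropic $(m-t+1)$-subspace $(T+U)/T$ nontrivially; double counting, via (\ref{JS}), the incidences of such subspaces with the $k$-dimensional subspaces of $(T+U)/T$ and applying $q$-binomial inversion yield
$$h_1(\nu,m,t)={m+1\b1}-{m-t+1\b1}+\sum_{k=1}^{m-t}(-1)^{k-1}b_k,\qquad b_k:=q^{\binom{k}{2}}{m-t+1\b k}N(\nu,m,t+k).$$
Note that $b_1-b_2=f_0(\nu,m,t)$, consistently with Lemma~\ref{PC}.

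The engine of the comparison is the inequality $b_{k+1}<\tfrac14 b_k$ for $1\le k\le m-t-1$: by (\ref{FFS}) the ratio $b_{k+1}/b_k$ is at most $4q^{2m-2\nu}$, and $2\nu\ge 3m+1$ forces the exponent to be at most $-(m+1)\le-4$. Since $\sum_{k\ge1}(-1)^{k-1}b_k$ is then alternating with rapidly decreasing terms, $f_0(\nu,m,t)\le\sum_{k\ge1}(-1)^{k-1}b_k\le b_1-b_2+b_3$ (with $b_3=0$ when $m=t+2$), so
$$f_0(\nu,m,t)+{m+1\b1}-{m-t+1\b1}\ \le\ h_1(\nu,m,t)\ \le\ b_1-b_2+b_3+{m+1\b1}-{m-t+1\b1}.$$

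For part~(\ro1), where $m-t\ge t+2$, the lower bound gives $h_1(\nu,m,t)>f_0(\nu,m,t)$, so it suffices to prove $f_0(\nu,m,t)\ge{t+2\b1}N(\nu,m,t+1)>h_2(\nu,m,t)$; this follows from the estimates ${m-t+1\b1}-{t+2\b1}\ge q^{t+2}$, $q{m-t+1\b2}<q^{2(m-t)+1}$ and $N(\nu,m,t+1)/N(\nu,m,t+2)>q^{2\nu-m-t-1}\ge q^{2m-t}$ supplied by (\ref{FFS}) (indeed $q^{t+2}\cdot q^{2m-t}=q^{2m+2}>q^{2(m-t)+1}$). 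For part~(\ro2) set $d:=m-t\in\{2,\dots,t+1\}$. If $d=t+1$ (i.e. $m=2t+1$), then ${t+2\b1}={d+1\b1}$, and the window above reduces $h_2-h_1$ to $q\big({t+2\b2}-{t+1\b1}\big)N(\nu,m,t+2)+{d+1\b1}-{m+1\b1}-R$ with $0\le R\le b_3$ and $R=0$ iff $t=1$; for $(m,t)=(3,1)$ a finite computation gives $h_2=h_1$, whereas for $t\ge2$ the term $q\big({t+2\b2}-{t+1\b1}\big)N(\nu,m,t+2)$, which exceeds $q^{2t}N(\nu,m,t+2)$, dominates ${m+1\b1}$ and $b_3$ by (\ref{FFS}), so $h_2>h_1$. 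If $d\le t$ (forcing $t\ge2$), then ${t+2\b1}-{d+1\b1}>q^{d+2}$ and $\big({t+2\b1}-{d+1\b1}\big)N(\nu,m,t+1)$ dominates each of $q{t+1\b1}N(\nu,m,t+2)$, ${m+1\b1}$ and $b_3$, again by (\ref{FFS}) and $2\nu\ge3m+1$, so $h_2>h_1$. Hence $h_2\ge h_1$, with equality precisely when $(m,t)=(3,1)$.

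The step I expect to be the main obstacle is the case analysis in part~(\ro2): the sign of ${d+1\b2}-{t+1\b1}$ varies, so one must separate $d=t+1$ from $d\le t$ and, within the former, single out the unique pair $(m,t)=(3,1)$ where the leading terms cancel exactly. All other estimates become routine once one has the decay $b_{k+1}<\tfrac14 b_k$, which is the sole place where $2\nu\ge3m+1$ is used.
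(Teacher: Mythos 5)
Your proposal is correct, and it takes a genuinely different route from the paper, mainly in part (\ro2). You first derive an exact alternating-sum formula $h_1(\nu,m,t)={m+1\b1}-{m-t+1\b1}+\sum_{k=1}^{m-t}(-1)^{k-1}b_k$ with $b_k=q^{\binom{k}{2}}{m-t+1\b k}N(\nu,m,t+k)$ (a valid $q$-inclusion-exclusion; the identity $\sum_k(-1)^kq^{\binom k2}{j\b k}=\delta_{j,0}$ does the inversion), and then exploit the geometric decay of the $b_k$ under $2\nu\ge 3m+1$ to sandwich $h_1$ between $f_0(\nu,m,t)+{m+1\b1}-{m-t+1\b1}$ and $b_1-b_2+b_3+{m+1\b1}-{m-t+1\b1}$, after which the comparison with $h_2$ is a term-by-term estimate that treats $m=t+2$, the equality case $(m,t)=(3,1)$, and the general case uniformly. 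The paper instead works with the family $\mh_1(T,U)$ combinatorially: it bounds $h_1$ above by $|\ml|-(q-1)|\ma_{t+2}|$ using lower bounds for $|\ma_{m-1}|$ and $|\ma_{t+2}|$ taken from the Anzahl formulas of \cite[Theorem 2.10]{QY}, splits part (\ro2) into the subcases $2t>m-1$, $2t=m-1$ and $m=t+2$, and settles the last one (including the $(3,1)$ equality) by an explicit decomposition into $\mb_0,\mb_1,\mb_2,\mb_3$ again counted via \cite{QY}. Your approach buys independence from the external Anzahl counts and gives the equality case as a two-line exact computation inside the same framework; the paper's approach avoids the exact inversion formula and stays closer to the sets that reappear in Section 3. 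Two small slips in your write-up should be repaired but do not affect validity: the ratio bound from (\ref{FFS}) is $b_{k+1}/b_k<q^{2m-2\nu+2}\le q^{1-m}\le q^{-2}$, not ``$4q^{2m-2\nu}$'' (the conclusion $b_{k+1}<\tfrac14 b_k$ still holds), and in the case $d\le t$ the difference ${t+2\b1}-{d+1\b1}$ equals $q^{d+1}+\cdots+q^{t+1}$, which is exactly $q^{d+1}$ when $d=t$, so your claimed lower bound $q^{d+2}$ fails there; the weaker bound $q^{d+1}=q^{m-t+1}$ still dominates $q{t+1\b1}N(\nu,m,t+2)$, ${m+1\b1}$ and $b_3$ with ample margin since $N(\nu,m,t+1)/N(\nu,m,t+2)>q^{2\nu-m-t-1}\ge q^{2m-t}$.
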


	\begin{proof}

		(\ro1) By (\ref{FFS}), (\ref{JS}) and Lemma \ref{PC}, we have
		\begin{equation*}
			\begin{aligned}
				&\dfrac{h_1(\nu,m,t)-h_2(\nu,m,t)}{N(\nu,m,t+2)}\\
				>&\left({m-t+1\b1}-{t+2\b1}\right)\dfrac{q^{2(\nu-t-1)}-1}{q^{m-t-1}-1}-q\left({m-t+1\b2}-{t+1\b1}\right) \\
				>& q^{t+2}\cdot q^{m-2t-2}\cdot q^{2\nu-t-m-1}-q^{2(m-t)+1}\\
				%&=q^{2\nu-2t-1}-q^{2m-2t+1}\\
				%&\ge q^{3m-2t}-q^{2m-2t+1}\\
				>&0.
			\end{aligned}
		\end{equation*}
		Hence $h_1(\nu,m,t)>h_2(\nu,m,t)$.

		(\ro2) Let $U\in\mp_m$, $T\in\mp_t$ with $T+U\in\mp_{m+1}$. Suppose $\dfrac{m}{2}-1< t\le m-3$ and $\mf=\mh_1(T,U)$.
		Write
		$$\ma_j=\left\{F\in\mp_m: T\subset F,\ \dim(F\cap(T+U))=j\right\}$$
		for each $t+1\le j\le m$, and
		$$\ml=\left\{(I,F)\in\mp_{t+1}\times\mp_m: T\subset I\subset T+U,\ I\subset F\right\}.$$
		Let $\mb=\bigcup\limits_{j=t+1}^m\ma_j$.
		We have
		$$|\mf\bs\mb|=|\{F\in\mm(m;T+U): T\not\subset F\}|=q^{m-t+1}{t\b1}.$$
		By double counting $|\ml|$, we have
		\begin{equation*}\label{BT1}
			{m-t+1\b1}N(\nu,m,t+1)=|\ml|=\sum_{j=t+1}^m|\ma_j|\cdot{j-t\b1}=|\mb|+\sum_{j=t+2}^mq{j-t-1\b1}|\ma_j|.
		\end{equation*}
		Since $m\ge t+3$, by \cite[Theorem 2.10]{QY}, we have
		\begin{equation*}
			\begin{aligned}
				|\ma_{m-1}|={m-t+1\b2}\left((q^{\nu-m+1}+q^2){\nu-m-1\b1}+q^{2(\nu-m)}{2\b1}\right).
			\end{aligned}
		\end{equation*}
		Then by (\ref{FFS}), we get
		$$\dfrac{|\ma_{m-1}|}{|\mf\bs\mb|}>\dfrac{q^{2(m-t-1)}\cdot q^{2(\nu-m)-1}}{q^{m-t+1}\cdot q^t}\ge q^{3m-2t-2}\cdot q^{-(m+1)}>1.$$
		Observe that $\mf=(\mf\bs\mb)\cup\mb$. It is routine to check that
		\begin{equation*}
			\begin{aligned}
				h_1(\nu,m,t)=|\mf|=|\mb|+|\mf\bs\mb|<|\ml|-(q-1)|\ma_{t+2}|.
			\end{aligned}
		\end{equation*}
		If $2t>m-1$, since $2\nu\ge3m+1$, by (\ref{FFS}), (\ref{JS}) and Lemma \ref{PC}, we obtain
		\begin{equation*}
			\begin{aligned}
				\dfrac{h_2(\nu,m,t)-h_1(\nu,m,t)}{N(\nu,m,t+2)}&>\left({t+2\b1}-{m-t+1\b1}\right)\dfrac{q^{2(\nu-t-1)}-1}{q^{m-t-1}-1}-q{t+1\b1}\\
				&\ge q^{m-t+1}\cdot q^{2t-m}\cdot q^{2\nu-m-t-1}-q^{t+2}\\
				%&\ge q^{2m+2}-q^{t+2}\\
				&>0.
			\end{aligned}
		\end{equation*}
		If $2t=m-1$, from (\ref{FFS}), (\ref{JS}), $2\nu\ge3m+2$, $t\ge2$ and \cite[Theorem 2.10]{QY}, we obtain
		\begin{equation*}
			\begin{aligned}
			&h_2(\nu,m,t)-h_1(\nu,m,t)\\>&(q-1)|\ma_{t+2}|-q{t+1\b1}N(\nu,m,t+2)\\
			>&(q-1){t+2\b 2}\cdot q^{\frac{t(t-1)}{2}+2(t-1)(\nu-2t-2)+2(t-1)}-q^t{t+1\b1}q^{2(t-1)(\nu-2t-2)+\frac{t(t-1)}{2}+2(t-1)}\\
			>&0.
			\end{aligned}
		\end{equation*}
		Hence $h_1(\nu,m,t)<h_2(\nu,m,t)$.

		Now suppose $m=t+2$. Let $M\in\mm(t+2;T+U)$ containing $T$.
		Write
		\begin{equation*}
			\begin{aligned}
				\mb_0&=\{F\in\mm(t+2;T+U): T\not\subset F\},\\
				\mb_1&=\{F\in\mp_{t+2}: T\not\subset F,\ \dim(F\cap M)=t+1\},\\
				\mb_2&=\{F\in\mp_{t+2}: T\subset F,\ \dim(F\cap M)\ge t+1\},\\
				\mb_3&=\{F\in\mp_{t+2}: F\cap M=T,\ \dim(F\cap(T+U))=t+1\}.
			\end{aligned}
		\end{equation*}
		Observe that
		$|\mb_0|={t+3\b1}-{3\b1}=q^3{t\b1}$.
		By \cite[Theorem 2.10]{QY}, we have
		$$
		|\mb_1|=\left((q^{\nu-t+1}+q^3){\nu-t-2\b1}+q^{2(\nu-t)-1}\right){t\b1},
		$$
		$$
		|\mb_3|=(q^{\nu-t+1}+q^4){\nu-t-3\b1}+q^{2(\nu-t-1)}{2\b1}.
		$$
		Notice that $\mh_1(T,U)=\mb_0\overset{.}{\cup}\mb_2\overset{.}{\cup}\mb_3$, $\mh_2(M)=\mb_1\overset{.}{\cup}\mb_2$, which imply that $h_2(\nu,m,t)-h_1(\nu,m,t)=|\mb_1|-|\mb_0|-|\mb_3|$.
		When $t=1$, it is routine to chech that $h_2(\nu,3,1)-h_1(\nu,3,1)=0$.
		If $t\ge2$, from $2\nu\ge3m+1=3t+7$ and (\ref{FFS}), we have
		\begin{equation*}
			\begin{aligned}
				&h_2(\nu,t+2,t)-h_1(\nu,t+2,t)\\
				\ge&\left((q^{\nu-t+1}+q^3)q^{\nu-t-3}+q^{2(\nu-t)-1}\right)q^{t-1}-\left((q^{\nu-t+1}+q^4)q^{\nu-t-3}+q^{2(\nu-t)}\right)-q^{t+3}\\
				%fffffffff=&(q^{t-1}-1)q^{2\nu-2t-2}+q^{\nu-t}(q^{t-1}-q)+(q^{t-2}-1)q^{2(\nu-t)}-q^{t+3}\\
				\ge&q^{2\nu-2t-2}-q^{t+3}\\
				>&0,
			\end{aligned}
		\end{equation*}
		as desired.
	\end{proof}

	\section{Proof of Theorem \ref{SSHMT}}
	
	Suppose $\mf\subset\mp_m$ is a maximum sized non-trivial $t$-intersecting family. When $m=t+1$ with $\nu\ge m+1$, by \cite[Remarks (\ro2) in Section 9.3]{T1}, there exists $Z\in{V\b t+1}$ such that $\mf\subset{Z\b t+1}$. Observe that $Z\in\mp_{t+2}$ or $Z$ has type $(t+2,1)$. Then it is routine to check that $\mf=\mh_2(Z)$ where $Z\in\mp_{t+2}$. To finish the proof of Theorem \ref{SSHMT}, in view of Lemmas \ref{PC} and \ref{C2}, it is sufficient to show that $|\mf|<f_0(\nu,m,t)$ if $m\ge t+2$ with $2\nu\ge\max\{3m+2,2m+2t+6\}$ and $\mf$ is not one of the expected families in Theorem \ref{SSHMT}. We first prove the following lemma.

	\begin{lem}\label{FS}
		Let $A\in\mm(a,0;2\nu)$ with $a\le m-1$, and $\my\subset\mp_m$ be a $t$-intersecting family. Suppose that there exists $Y\in\my$ such that $\dim(A\cap Y)=r\le t-1$. Then there exists $R\in\mp_{a+t-r}$ with $A\subset R$ such that
		$$|\my_A|\le{m-r\b t-r}|\my_R|.$$
	\end{lem}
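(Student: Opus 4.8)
The plan is to build the subspace $R$ greedily by adding one dimension at a time, using the fixed member $Y$ as an "anchor" that every element of $\my_A$ must intersect in dimension at least $t$. Set $A_0 = A$, so $\dim(A_0 \cap Y) = r$. Suppose inductively that for some $j$ with $0 \le j < t-r$ we have produced a totally isotropic subspace $A_j$ with $A \subseteq A_j$, $\dim A_j = a + j$, $\dim(A_j \cap Y) = r + j$, and $\my_{A_j} \neq \emptyset$ (if it becomes empty we are already done, since then $|\my_{A_j}| = 0$ and the bound is trivial). For any $F \in \my_{A_j}$ we have $\dim(F \cap Y) \ge t > r + j = \dim(A_j \cap Y)$, so $F$ contains a vector of $Y$ outside $A_j$; in particular $(F \cap Y) + A_j$ properly contains $A_j$ and lies inside $F$ (hence is totally isotropic). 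Thus each $F \in \my_{A_j}$ determines at least one $(a+j+1)$-subspace of the form $A_j + \langle y \rangle$ with $y \in (F \cap Y) \setminus A_j$; there are exactly ${m - (r+j) \b 1}$ such subspaces available inside $\langle A_j, Y\rangle$ (equivalently, lines through $A_j$ in the quotient meeting the image of $Y$), and all of them are totally isotropic since $A_j + Y$ sits inside... well, $A_j + \langle y\rangle \subseteq F$. By pigeonhole one of these subspaces, call it $A_{j+1}$, satisfies $|\my_{A_{j+1}}| \ge |\my_{A_j}| / {m-r-j \b 1}$, and by construction $\dim(A_{j+1} \cap Y) = r + j + 1$. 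This completes the inductive step.

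After $t - r$ steps we obtain $R := A_{t-r} \in \mp_{a+t-r}$ with $A \subseteq R$ and
$$|\my_A| = |\my_{A_0}| \le \left(\prod_{j=0}^{t-r-1} {m-r-j \b 1}\right) |\my_R|.$$
It remains to identify the product of Gaussian binomials $\prod_{j=0}^{t-r-1}{m-r-j \b 1}$ with ${m-r \b t-r}$. This is the standard telescoping identity ${m-r \b t-r} = \prod_{j=0}^{t-r-1}\frac{q^{m-r-j}-1}{q^{t-r-j}-1}$, and since each factor ${m-r-j \b 1} = \frac{q^{m-r-j}-1}{q-1}$ overshoots the corresponding factor $\frac{q^{m-r-j}-1}{q^{t-r-j}-1}$ (because $q^{t-r-j} - 1 \ge q - 1$ for $j \le t-r-1$), we in fact only get $\prod_j {m-r-j\b 1} \ge {m-r\b t-r}$, which is the wrong direction. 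So the crude pigeonhole at each step is too lossy and must be sharpened: instead of choosing one line per $F$, one should double-count pairs $(A', F)$ where $A'$ ranges over all $(a+j+1)$-dimensional totally isotropic subspaces with $A_j \subseteq A' \subseteq A_j + (F\cap Y)$, of which there are ${\dim(F\cap Y) - (r+j) \b 1} \ge {t - r - j \b 1}$ for each such $F$ — wait, that still needs care. The cleaner route is a single global double count: let $\ml$ be the set of pairs $(R', F)$ with $R' \in \mp_{a+t-r}$, $A \subseteq R'$, $R' \subseteq A + (F \cap Y)$ when $\dim(F \cap Y)$ is large enough, and $F \in \my_A$; each $F \in \my_A$ contributes exactly ${m - r \b t - r}$ choices of $R'$ (the $(t-r)$-dimensional subspaces of the quotient $(A + (F\cap Y))/A \cong$ a space containing an $(\dim(F\cap Y)-r)$-dimensional, hence at-least-$(t-r)$-dimensional, image of $F\cap Y$), while each $R'$ is paired with members of $\my_{R'}$ only. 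Summing and applying pigeonhole over the (finitely many) $R'$ then yields the existence of one $R$ with $|\my_A| \le {m-r\b t-r}|\my_R|$.

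The main obstacle is exactly this bookkeeping: making precise, for each $F \in \my_A$, the count of $(a+t-r)$-dimensional totally isotropic subspaces $R'$ sandwiched between $A$ and $A + (F \cap Y)$, and checking it equals (or is at least) ${m-r \b t-r}$ uniformly in $F$. The point is that $A + (F \cap Y) \subseteq A + F$ need not be totally isotropic in general, but $A \subseteq F$ is false in general too — we only know $A \subseteq R' \subseteq A+(F\cap Y)$ and we need $R'$ totally isotropic; since $A$ is totally isotropic and $F\cap Y \subseteq F$ is totally isotropic, any subspace of $A + (F\cap Y)$ that... is \emph{not} automatically isotropic unless $f(A, F\cap Y) = 0$. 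This is where one must use that $\my$ is $t$-intersecting with the \emph{specific} witness $Y$: replace $F \cap Y$ by $F \cap Y$ and note $A \cap Y \subseteq F$, so actually the relevant ambient space is $A + (F \cap Y)$ and one restricts to those $R'$ inside $F$ itself when possible; handling the non-isotropic case will require either enlarging the target slightly or invoking that $Y \in \my$ forces $\dim(A \cap Y \cap F) $ to be controlled. I expect the final argument to choose $R$ inside $A + Y$ directly (which \emph{is} totally isotropic, being $\subseteq$ a sum of two t.i. subspaces only when $f(A,Y)=0$ — again not automatic), so the correct formulation is likely to take $R \subseteq A + (F\cap Y)$ with the count done in the quotient by $A$ and the isotropy inherited from $F$; pinning down that inheritance is the crux, and everything else is the telescoping identity $\prod_{j}\frac{q^{m-r-j}-1}{q^{t-r-j}-1} = {m-r\b t-r}$ applied in reverse to see the single-step greedy bound is actually an equality once one counts per-$F$ with the right multiplicities.
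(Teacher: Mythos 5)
There is a genuine gap: you never arrive at a complete argument, and the two obstacles you flag are both resolvable in ways you miss. First, the isotropy worry dissolves immediately: every $F\in\my_A$ contains $A$ by the definition of $\my_A$, so $A+(F\cap Y)\subseteq F$ is automatically totally isotropic (you write ``$A\subseteq F$ is false in general too,'' which is exactly backwards for the sets $\my_A$ in question). Second, the counting should not be done per $F$ at all; your claim that each $F\in\my_A$ contributes exactly ${m-r\b t-r}$ subspaces $R'$ is false in general (the number depends on $\dim(F\cap Y)$ and on $\dim(A\cap F\cap Y)$), and your greedy one-step-at-a-time version, as you yourself note, only yields the constant $\prod_j{m-r-j\b1}$, which is too large.

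The paper's argument is a plain covering argument with the fixed witness $Y$ as ambient space: let $\mh$ be the set of totally isotropic $(a+t-r)$-subspaces $H$ with $A\subset H\subset A+Y$. For each $G\in\my_A$ one has $\dim(G\cap Y)\ge t$ and $\dim(A\cap G\cap Y)\le\dim(A\cap Y)=r$, so $A+(G\cap Y)$ is a totally isotropic subspace of $A+Y$ of dimension at least $a+t-r$ containing $A$; hence $G$ contains some member of $\mh$, i.e. $\my_A\subseteq\bigcup_{B\in\mh}\my_B$. Since $\dim(A+Y)=a+m-r$, counting in the quotient by $A$ gives $|\mh|\le{m-r\b t-r}$, and choosing $R\in\mh$ maximizing $|\my_R|$ finishes the proof. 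Note that all your candidate subspaces $A+(F\cap Y)$ already lie inside the single fixed space $A+Y$, which is the observation that replaces your per-$F$ multiplicity bookkeeping; no double count, exact multiplicities, or telescoping identity is needed.
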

	\begin{proof}
		W.l.o.g., assume that $\my_A\neq\emptyset$. Write
		$$\mh=\{H\in\mm(a+t-r,0;A+Y): A\subset H\}.$$
		Since $\my$ is $t$-intersecting, for each $G\in\my_A$, we have $\dim(G\cap Y)\ge t$, which implies that $\dim(G\cap(A+Y))\ge a+t-r$. Consequently, $\mh\neq\emptyset$.
		Then there exists $R\in\mh$ such that $|\my_B|\le|\my_R|$ for any $B\in\mh$.
		Observe that
		$$|\mh|\le{m-r\b t-r},\quad\my_A\subset\bigcup_{B\in\mh}\my_B.$$
		Then the desired result holds.
	\end{proof}

	For $S\in\mp_s$, we say $S$ is a $t$-\emph{cover} of $\mf$ if $\dim(S\cap F)\ge t$ for any $F\in\mf$.
	Write
	$$\tau_t(\mf):=\min\{\dim S: S\ \text{is a}\ t\text{-cover of}\ \mf\}.$$
	Since $\mf$ is non-trivial and each member of $\mf$ is a $t$-cover of $\mf$, we have $t+1\le\tau_t(\mf)\le m$. Notice that each member of $\kn$ containing $S$ is also a member of $\mf$ because of the maximality of $\mf$. The proof in the following is divided into two cases.

	\subsection{The case $\tau_t(\mf)=t+1$}

	\noindent\textbf{Assumption 1.} Let $\nu,m,t$ be positive integers with $2\nu\ge\max\{3m+2,2m+2t+6\}$ and $m\ge t+2$. Suppose $\mf$ is a maximum sized non-trivial $t$-intersecting subfamily of $\mp_m$ with $\tau_t(\mf)=t+1$ and $\ms$ is the set of all $t$-cover of $\mf$ with dimension $t+1$. Set $X=\sum\limits_{S\in\ms}S$.

	\begin{lem}\label{XJ}
		Let $\nu,m,t,\mf,\ms$ and $X$ be as in Assumption 1. Then $\ms$ is a $t$-intersecting family with $t\le\tau_t(\ms)\le t+1$. Moreover, if $\tau_t(\ms)=t$, then $t+1\le\dim X\le m+1$ and $\dim(F\cap X)=\dim X-1$ for any $F\in\mf\bs\mf_T$, where $T\in\mp_t$ is contained in each member of $\ms$.
	\end{lem}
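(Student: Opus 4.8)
The plan is to prove the two assertions separately, the first being the one that uses the hypothesis on $2\nu$. To show $\ms$ is $t$-intersecting, fix $S_1,S_2\in\ms$ and set $d=\dim(S_1\cap S_2)$; the goal is $d\ge t$. The crucial input is the maximality of $\mf$ recorded before the lemma: $\mf$ contains every member of $\mp_m$ containing the $t$-cover $S_1$, so \emph{every} totally isotropic $m$-subspace $G$ with $S_1\subseteq G$ lies in $\mf$ and hence satisfies $\dim(G\cap S_2)\ge t$, since $S_2$ is a $t$-cover of $\mf$. It is therefore enough to produce one such $G$ with $G\cap S_2=S_1\cap S_2$, for then $d=\dim(G\cap S_2)\ge t$. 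Since any totally isotropic $G\supseteq S_1$ lies in $S_1^{\perp}$, one has $G\cap S_2=G\cap(S_2\cap S_1^{\perp})$, and after passing to the symplectic quotient space $S_1^{\perp}/S_1$, of dimension $2(\nu-t-1)$, the task reduces to finding a totally isotropic $(m-t-1)$-subspace of this quotient that meets the image of $S_2\cap S_1^{\perp}$ — a subspace of dimension at most $t+1$ — trivially. This is done by a straightforward greedy extension: having chosen $j<m-t-1$ vectors spanning a totally isotropic subspace disjoint from that image, the orthogonal complement of the span so far is not contained in the small subspace we must avoid, because $2\nu\ge 3m+2$ keeps the quotient large enough; alternatively one counts, bounding the number of totally isotropic $m$-extensions of $S_1$ meeting $S_2$ in more than $d$ dimensions by ${t+1\b1}N(\nu,m,t+2)$ and comparing with $N(\nu,m,t+1)$ via (\ref{FFS}) and (\ref{JS}). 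Once $G$ is obtained, $d\ge t$ follows, and then $t\le\tau_t(\ms)\le t+1$ is immediate: a $t$-cover has dimension at least $t$, while any single member of the ($t$-intersecting) family $\ms$ is a $t$-cover of $\ms$ of dimension $t+1$.

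For the ``moreover'' part, suppose $\tau_t(\ms)=t$ and let $T\in\mp_t$ be a minimum $t$-cover of $\ms$. Because $\dim T=t$, the inequality $\dim(T\cap S)\ge t$ forces $T\subseteq S$ for every $S\in\ms$, so I write $S=T\oplus\langle v_S\rangle$. Since $\mf$ is non-trivial there is $F\in\mf\bs\mf_T$; fix such an $F$ and any $S\in\ms$. As $S$ is a $t$-cover of $\mf$ we have $\dim(F\cap S)\ge t$, while the projection $F\cap S\to\langle v_S\rangle$ has kernel $F\cap T$, of dimension at most $t-1$; hence $\dim(F\cap S)=t$, $\dim(F\cap T)=t-1$, and the projection is onto, so $v_S\in F+T$. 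As $S$ was arbitrary, $X=T+\sum_{S\in\ms}\langle v_S\rangle\subseteq F+T$, and $\dim(F+T)=m+t-(t-1)=m+1$; combined with $X\supseteq S$ for $S\in\ms$ this gives $t+1\le\dim X\le m+1$. Finally $X\not\subseteq F$, since otherwise $T\subseteq S\subseteq X\subseteq F$ would contradict $F\in\mf\bs\mf_T$, so
$$\dim X-1\ \ge\ \dim(F\cap X)\ \ge\ \dim F+\dim X-\dim(F+T)\ =\ \dim X-1,$$
whence $\dim(F\cap X)=\dim X-1$ for every $F\in\mf\bs\mf_T$, as claimed.

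The only real obstacle is the first step. Everything in the ``moreover'' part is elementary dimension bookkeeping, and the bounds on $\tau_t(\ms)$ are formal consequences of $t$-intersection; but showing $\ms$ is $t$-intersecting genuinely requires upgrading ``$S_2$ meets some $F\in\mf$ containing $S_1$ in dimension $\ge t$'' to ``$S_2$ meets every totally isotropic $m$-extension of $S_1$ in dimension $\ge t$'' via maximality, and then producing an extension that meets $S_2$ minimally — the one place where the size hypothesis on $2\nu$ is invoked.
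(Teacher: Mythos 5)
Your proof is correct and follows essentially the same route as the paper: maximality turns each $t$-cover in $\ms$ into a full star inside $\mf$, a totally isotropic extension avoiding the other cover forces $\dim(S_1\cap S_2)\ge t$, and the ``moreover'' part is the same dimension bookkeeping. The only difference is cosmetic: you extend only $S_1$ (greedily, in the quotient $S_1^{\perp}/S_1$) and invoke the $t$-cover property of $S_2$, whereas the paper extends both covers to $U_1,U_2\in\mf$ with $U_1\cap U_2=S_1\cap S_2$ and contradicts the $t$-intersection of $\mf$; both verifications go through under the stated bounds on $2\nu$.
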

	\begin{proof}
		Suppose for contradiction that there exist $S_1,S_2\in\ms$ such that $\dim(S_1\cap S_2)<t$.
		By (\ref{JS}), the number of $A\in\mp_{t+2}$ containing $S_2$ is $N(\nu,t+2,t+1)={2(\nu-t-1)\b1}$. Observe that the number of members of $\mm(t+2;S_1+S_2)$ containing $S_2$ is at most ${t+1\b1}$. Since $2\nu>3(t+1)$, we have ${2(\nu-t-1)\b1}>{t+1\b1}$. Then
		\begin{equation*}
			\begin{aligned}
				\{A\in\mp_{t+2}: S_2\subset A\not\subset S_1+S_2\}=&\{A\in\mp_{t+2}: S_2\subset A\}\bs\{A\in\mp_{t+2}: A\subset S_1+S_2\}\neq\emptyset.
			\end{aligned}
		\end{equation*}
		For $S_2'\in\mp_{t+2}$ with $S_2\subset S_2'\not\subset S_1+S_2$, it is routine to check that $S_2'\cap S_1=S_2\cap S_1$. Repeating this discussion several times, we obtain $U_1, U_2\in\mp_m$ containing $S_1$ and $S_2$, respectively, satisfying that $U_1\cap U_2=S_1\cap S_2$. By the maximality of $\mf$, we have $U_1,U_2\in\mf$, a contradiction to the fact that $\mf$ is $t$-intersecting, as desired. From $\ms$ is $t$-intersecting, we get $t\le\tau_t(\ms)\le t+1$.
		
		Suppose $\tau_t(\ms)=t$ and $T$ is a member of $\mp_t$ contained in each member of $\ms$. 	For each $F\in\mf\bs\mf_T$ and $S\in\ms$, observe that $\dim(S\cap F)=t$. Then
		$$m+1\le\dim(T+F)\le\dim(S+F)=m+1.$$
		Note that $T+F\subset S+F$. We get $S+F=T+F$, which implies that $T+F=X+F$. From $\dim(T+F)=m+1$, we get $\dim(F\cap X)=\dim X-1$ and $\dim X\le m+1$. Together with $\dim X\ge \dim S=t+1$, the desired result holds.
	\end{proof}

	\begin{pr1}
		Let $\nu,m,t,\mf,\ms$ and $X$ be as in Assumption 1. If $\tau_t(\ms)=t$ and $\dim X=m+1$, then $\mf=\mh_1(T,U)$ for some $T\in\mp_t$ and $U\in\mp_m$ with $T+U\in\mp_{m+1}$.
	\end{pr1}

	\begin{proof}
		Suppose $T\in\mp_t$ is contained in each member of $\ms$.  For each $U\in\mf\bs\mf_T$, by Lemma \ref{XJ}, we have $\dim(U\cap X)=m$ and $X=T+U$, which imply that
		$$\mf\bs\mf_T\subset\left\{F\in\mm(m;T+U): T\not\subset F\right\}.$$
		For each $G\in\mf_T$, since $\dim(U\cap G)\ge t$ and $\dim(T\cap U)=t-1$, we have $\dim(G\cap(T+U))\ge t+1$, which implies that
		$$\mf_T\subset\left\{F\in\mp_m: T\subset F,\ \dim(F\cap(T+U))\ge t+1\right\}.$$
		Thus $\mf\subset\mh_1(T,U)$. Observe that $\mh_1(T,H)$ is non-trivially $t$-intersecting. By the maximality of $\mf$, we have $\mf=\mh_1(T,U)$.
		
		If $X\not\in\mp_{m+1}$, by Lemma \ref{PC}, we have $|\mf|<h_1(\nu,m,t)$. Note $\mf$ is maximum sized. Then we get  $T+U\in\mp_{m+1}$.
	\end{proof}

	\begin{pr1}\label{t+1,t,1}
		Let $\nu,m,t,\mf,\ms$ and $X$ be as in Assumption 1. If $\tau_t(\ms)=t$ and $t+1\le\dim X\le m$, then $|\mf|<f_0(\nu,m,t)$.
	\end{pr1}
	\begin{proof}
		We investigate $\mf$ in two cases.
		
		\noindent\textbf{Case 1. $\dim X=t+1$.}
		
		In this case, we have $|\ms|=1$. Let $S$ be the unique member of $\ms$. Observe that
		\begin{equation}\label{t+1,t,1,1}
			\mf=\mf_S\cup\left(\bigcup_{R\in\mm(t;S)}\mf_R\bs\mf_S\right).
		\end{equation}
		For each $R\in\mm(t;S)$, since $\tau_t(\mf)=t+1$, there exists $F\in\mf\bs\mf_S$ such that $\dim(F\cap R)\le t-1$.  Note that $\dim(F\cap S)=t$.
		We get $\dim(F\cap R)=t-1$.
		Let $H_1=F+R$.
		We have $\dim H_1=m+1$.
		It is routine to check that $S\subset H_1$ and $\dim(F'\cap H_1)\ge t+1$ for any $F'\in\mf_R\bs\mf_S$.
		Then
		\begin{equation}\label{t+1,t,1,2}
			\mf_R\bs\mf_S\subset\bigcup_{I\in\mm(t+1;H_1)\bs\{S\},\ R\subset I}\mf_I.
		\end{equation}
		
		For each $I\in\mm(t+1;H_1)\bs\{S\}$ with $R\subset I$,  there exists $F''\in\mf$ such that $\dim(F''\cap I)\le t-1$. Then from $R\subset I$ and $\dim(S\cap F'')\ge t$, we have $\dim(F''\cap I)=t-1$. By Lemma \ref{FS}, we have
		$$|\mf_I|\le{m-t+1\b1}N(\nu,m,t+2).$$
		Note that
		$$\left|\left\{I\in\mm(t+1;H_1): R\subset S, I\neq S\right\}\right|\le{m-t+1\b1}-1=q{m-t\b1},$$
		$$|\mf_S|\le N(\nu,m,t+1),\quad\left|\mm(t;S)\right|={t+1\b1}.$$
		It follows from (\ref{t+1,t,1,1}) and (\ref{t+1,t,1,2}) that
		$$|\mf|\le N(\nu,m,t+1)+q{m-t\b1}{t+1\b1}{m-t+1\b1}N(\nu,m,t+2).$$
		By (\ref{FFS}), (\ref{JS}) and $2\nu\ge 2m+2t+6$, we obtain
		\begin{equation*}
			\begin{aligned}
				&\dfrac{f_0(\nu,m,t)-|\mf|}{N(\nu,m,t+2)}\\
				\ge&q{m-t\b1}\cdot\dfrac{q^{2(\nu-t-1)}-1}{q^{m-t-1}-1}-q{m-t+1\b2}-q{m-t\b1}{t+1\b1}{m-t+1\b1}\\
				%&>q^{m-t}\cdot q^{2\nu-t-m-1}-q^{2(m-t)+1}-q\cdot q^{m-t}\cdot q^{t+1}\cdot q^{m-t+1}\\
				>&q^{2\nu-2t-1}-q^{2(m-t)+1}-q^{2m-t+3}\\
				%&>q^{2\nu-2t-1}-q^{2m-t+4}\\
				>&0,
			\end{aligned}
		\end{equation*}
		as desired.

		\noindent{\bf Case 2. $t+2\le\dim X\le m$.}

		Write $k=\dim X$. Suppose that $T\in\mp_t$ is contained in each member of $\ms$. For $G\in\mf\bs\mf_T$, let $H_2=T+G$.
		For each $G'\in\mf_T$, if $G'\cap X=T$, since $\dim(G\cap G')\ge t$ and $T\not\subset G$, we have $\dim(G'\cap H_2)\ge t+1$.
		On the other hand, if $T\subsetneq G'\cap X$, then there exists a member of $\mm(t+1;X)$ containing $T$.
		Write
		$$\ma:=\left\{A\in\mm(t+1;H_2): T\subset A\not\subset X\right\},\quad\mb:=\left\{B\in\mm(t+1;X): T\subset B\right\}.$$
		We have
		\begin{equation}\label{t+1,t,2,1}
			\mf_T=\left(\bigcup_{A\in\ma}\mf_A\right)\cup\left(\bigcup_{B\in\mb}\mf_B\right).
		\end{equation}
		For each $A\in\ma$, by assumption, $A$ is not a $t$-cover of $\mf$.
		Then there exists $G''\in\mf\bs\mf_T$ such that $\dim(G''\cap A)\le t-1$.
		Together with $\dim(G''\cap T)=t-1$,
		we have $\dim(G''\cap A)=t-1$.
		By Lemma \ref{FS}, we have
		$$|\mf_A|\le{m-t+1\b1}N(\nu,m,t+2).$$
		By Lemma \ref{XJ}, we get $\dim(T\cap G)=t-1$, which implies that $\dim H_2=m+1$ and $X=T+(G\cap X)\subset H_2$. Then
		$$|\ma|\le{m-t+1\b 1}-{k-t\b1}=q^{k-t}{m-k+1\b1},\quad|\mb|\le{k-t\b1}.$$
		Together with $|\mf_B|\le N(\nu,m,t+1)$ for each $B\in\mb$, from (\ref{t+1,t,2,1}) we obtain
		\begin{equation}\label{S}
			|\mf_T|\le{k-t\b1}N(\nu,m,t+1)+q^{k-t}{m-k+1\b1}{m-t+1\b1}N(\nu,m,t+2).
		\end{equation}
		Write $\mc=\{C\in\mm(k-1;X): T\not\subset C\}$. Suppose $X\not\in\mp_k$, then $X$ has type $(k,1)$. Note that $|\mc|\le q+1$.
		Then
		$$|\mf\bs\mf_T|\le(q+1)N(\nu,m,k-1).$$
		If $X\in\mp_k$, we have $|\mc|={k\b1}-{k-t\b1}=q^{k-t}{t\b1}$. Then
		$$|\mf\bs\mf_T|\le q^{k-t}{t\b1}(N(\nu,m,k-1)-N(\nu,m,k)).$$
		Since $2\nu\ge 3m+2$ and $m\ge k\ge t+2\ge3$, by (\ref{FFS}) and (\ref{JS}), we have
		\begin{equation*}
			\begin{aligned}
				&\dfrac{q^{k-t}{t\b1}(N(\nu,m,k-1)-N(\nu,m,k))}{N(\nu,m,k-1)}\\
				=&q^{k-t}{t\b1}\left(1-\dfrac{q^{m-k+1}-1}{q^{2(\nu-k+1)}-1}\right)\\
				>&q^{k-t}\cdot q^{t-1}\cdot\left(1-q^{m+k-2\nu-1}\right)\\
				%&>\dfrac{q^{k-1}-1}{q+1}\\
				\ge&q+1,
			\end{aligned}
		\end{equation*}
		which implies that
		\begin{equation}\label{C}|\mf\bs\mf_T|\le q^{k-t}{t\b1}(N(\nu,m,k-1)-N(\nu,m,k))\end{equation}
		whenever $X$ is a member of $\mp_k$ or not.
		
		Suppose $k\ge t+3$.
		For each $t+3\le l\le m$, write
		$$u(l)=q^{l-t}{t\b1}(N(\nu,m,l-1)-N(\nu,m,l)).$$
		For each $t+3\le l<m$, since $2\nu\ge3m+2>m+l$, by (\ref{FFS}), (\ref{JS}) and $u(l+1)\le q^{l-t+1}{t\b1}N(\nu,m,l)$, we have
		\begin{equation*}
			\begin{aligned}
				\dfrac{u(l)}{u(l+1)}&\ge\dfrac{1}{q}\cdot\left(\dfrac{q^{2(\nu-l+1)}-1}{q^{m-l+1}-1}-1\right)>q^{2\nu-m-l}-\dfrac{1}{q}>1.
			\end{aligned}
		\end{equation*}
		Hence
		\begin{equation*}
			\begin{aligned}
				&\dfrac{f_0(\nu,m,t)-|\mf|}{N(\nu,m,t+2)}\\
				\ge&q^{k-t}{m-k+1\b1}\cdot\dfrac{q^{2(\nu-t-1)}-1}{q^{m-t-1}-1}-q{m-t+1\b2}-q^{k-t}{m-k+1\b1}{m-t+1\b1}-u(t+3)\\
				>&q^{2\nu-2t-1}-q^{2(m-t)+1}-q^{2m-2t+2}-q^{t+3}\\
				%\ge&q^{2\nu-2t-1}-\max\{q^{2m-2t+4},q^{t+4}\}\\
				\ge&0
			\end{aligned}
		\end{equation*}
		follows from (\ref{FFS}), (\ref{JS}), (\ref{S}), (\ref{C}) and $2\nu\ge\max\{3m+2,2m+2t+6\}$, as desired.

		Now suppose that $k=t+2$. For each $C\in\mc$, since $T\not\subset C$, $C$ is not a $t$-cover of $\mf$. Thus there exists $E\in\mf$ such that $\dim(E\cap C)\le t-1$. Notice that $\dim(E\cap X)\ge t$, which implies that $\dim(E\cap C)=t-1$.
		Then $|\mf_C|\le{m-t+1\b1}N(\nu,m,t+2)$ follows from Lemma \ref{FS}. Together with $|\mc|\le\max\left\{q+1,{t+2\b1}-{2\b1}\right\}=q^2{t\b1}$, we get
		\begin{equation}\label{D}
			|\mf\bs\mf_T|\le q^2{t\b1}{m-t+1\b1}N(\nu,m,t+2).
		\end{equation}
		Then from (\ref{FFS}), (\ref{JS}), (\ref{S}), (\ref{D}) and $2\nu\ge\max\{3m+2,2m+2t+6\}$, we obtain
		\begin{equation*}
			\begin{aligned}
				&\dfrac{f_0(\nu,m,t)-|\mf|}{N(\nu,m,t+2)}\\
				\ge& q^{k-t}{m-k+1\b1}\cdot\dfrac{q^{2(\nu-t-1)-1}}{q^{m-t-1}-1}-q{m-t+1\b2}-\left(q^{k-t}{m-k+1\b1}+q^2{t\b1}\right){m-t+1\b1}\\
				>&q^{2\nu-2t-1}-q^{2(m-t)+1}-q^{2m-2t+2}-q^{m+3}\\
				%\ge&q^{2\nu-2t-1}-\max\{q^{2m-2t+4},q^{m+4}\}\\
				\ge&0,
			\end{aligned}
		\end{equation*}
		as desired.
	\end{proof}

	\begin{pr1}
		Let $\nu,m,t$ and $\mf,\ms$ be as in Assumption 1. If $\tau_t(\ms)=t+1$, then $\mf=\mh_2(Z)$ for some $Z\in\mp_{t+2}$.
	\end{pr1}

	\begin{proof}
		By $\tau_t(\ms)=t+1$ and Lemma \ref{XJ}, $\ms$ is a non-trivial $t$-intersecting subfamily of ${V\b t+1}$. Therefore, by \cite[Remarks (\ro2) in Section 9.3]{T1}, there exists $Z\in{V\b t+2}$ such that $\ms\subset{Z\b t+1}$.
		
		Pick different members $A,B$ of $\ms$. Let $E=A\cap B$. Since $\ms$ is non-trivial, there exists $C\in\ms$ such that $E\not\subset C$. For any $F\in\mf$, if $E\subset F$, together with $\dim(F\cap C)\ge t$, we get $\dim(F\cap Z)\ge t+1$. If $E\not\subset F$, then $F\cap A$ and $F\cap B$ are two different subspaces of $Z$ with dimension at least $t$, which implies that $\dim(F\cap Z)\ge t+1$. Therefore, $\mf\subset\mh_2(Z)$. By the maximality of $\mf$ and $\mh_2(Z)$ is $t$-intersecting, we have $\mf=\mh_2(Z)$.
		
		Observe that $A+B=Z$ and $\dim(A\cap B)=t$. If $Z\not\in\mp_{t+2}$, $Z$ has type $(t+2,1)$. Then $|\mf|=(q+1)N(\nu,m,t+1)$.
		To finish the proof, it is sufficient to show that
		$$(q+1)N(\nu,m,t+1)<h_2(\nu,m,t).$$
		Since $2\nu\ge3m+2$, by (\ref{FFS}) and (\ref{JS}), we have
		\begin{equation*}
			\begin{aligned}
				\dfrac{h_2(\nu,m,t)}{N(\nu,m,t+1)}&={t+2\b1}-q{t+1\b1}\cdot\dfrac{q^{m-t-1}-1}{q^{2(\nu-t-1)}-1}>q^{t+1}-q^{-3}>q+1,
				%&>\dfrac{q^{t+1}-q^{m+2t+3-2\nu}}{q+1}\\
				%	&\ge\dfrac{q^{t+1}-q^{-3}}{q+1}\\
				%&>1,
			\end{aligned}
		\end{equation*}
		as desired.
	\end{proof}

	\subsection{The case $\tau_t(\mf)\ge t+2$}

	\begin{pr1}\label{t+2}
		Let $\nu,m,t$ be positive integers with $2\nu\ge\max\{3m+2,2m+2t+6\}$ and $m\ge t+2$. Suppose $\mf$ is a maximum sized non-trivial $t$-intersecting subfamily of $\mp_m$ with $\tau_t(\mf)\ge t+2$. Then $|\mf|<f_0(\nu,m,t)$.
	\end{pr1}
	\begin{proof}
		Assume that $\tau_t(\mf)=r$.
		For each $S\in\ms$, observe that
		$$\mf\subset\bigcup_{W\in\mm(t;S)}\mf_W.$$
		For each $W\in\mm(t;S)$, w.l.o.g., assume that $\mf_W\neq\emptyset$. It is routine to check that ${m-a\b t-a}\le{m-t+1\b1}^{t-a}$ for each $0\le a\le t-1$. Then by Lemma \ref{FS}, there exist some totally isotropic subspaces $W_1,\dots,W_u$ such that $W=W_0\subset W_1\subset\cdots\subset W_u$ with $\dim W_u\ge r$, $\dim W_{u-1}<r$ and $$|\mf_{W_{i-1}}|\le{m-t+1\b1}^{\dim W_i-\dim W_{i-1}}|\mf_{W_i}|$$
		for each $i=1,\dots,u$. Notice that $\dim W_u\le m$ from $$|\mf_{W_u}|\ge|\mf_W|/{k-t+1\b1}^{\dim W_u-t}>0.$$
		When $t+2\le a\le m-1$ and $2\nu\ge3m-t$, we have
		\begin{equation*}
			\dfrac{N(\nu,m,a)}{N(\nu,m,a+1)}=\dfrac{q^{2(\nu-a)-1}}{q^{m-a}-1}\ge q^{2\nu-m-a}\ge q^{m-t+1}\ge{m-t+1\b1}.
		\end{equation*}
		Consequently,
		$$|\mf_W|\le{m-t+1\b1}^{\dim W_u-t}N(\nu,m,\dim W_u)\le{m-t+1\b1}^{r-t}N(\nu,m,r).$$
		Together with $|\mm(t;S)|\le{r\b t}$, we get
		$$|\mf|\le {r\b t}{m-t+1\b1}^{r-t}N(\nu,m,r).$$
		For each integer $s$ with $t\le s\le m$, write
		$$g(s)={s\b t}{m-t+1\b1}^{s-t}N(\nu,m,s).$$
		For $t\le s\le m-1$, from (\ref{FFS}), (\ref{JS}) and $2\nu\ge3m+2$, we obtain
		$$\dfrac{g(s+1)}{g(s)}=\dfrac{(q^{s+1}-1)(q^{m-t+1}-1)(q^{m-s}-1)}{(q^{s-t+1}-1)(q-1)(q^{2(\nu-s)}-1)}<q^{2m+s+2-2\nu}\le q^{3m+1-2\nu}<1.$$
		Then
		$$|\mf|\le{t+2\b2}{m-t+1\b1}^2N(\nu,m,t+2).$$
		Since $2\nu\ge 2m+2t+6$, by (\ref{FFS}) and (\ref{JS}), we get
		\begin{equation*}
			\begin{aligned}
				&\dfrac{f_0(\nu,m,t)-|\mf|}{N(\nu,m,t+2)}\\
				\ge&{m-t+1\b1}\cdot\dfrac{q^{2(\nu-t-1)-1}}{q^{m-t-1}-1}-q{m-t+1\b2}-{t+2\b2}{m-t+1\b1}^2\\
				>&q^{2\nu-2t-1}-q^{2(m-t)+1}-q^{2(m+2)}\\
				%&>q^{2\nu-2t-1}-q^{2m+5}\\
				\ge&0,
			\end{aligned}
		\end{equation*}
		as desired.
	\end{proof}

	\vskip0.1in
	\noindent\textsc{Acknowledgement.} This research is supported by NSFC (12071039).


\begin{thebibliography}{99}
		\bibitem{SHMT}R. Ahlswede and L.H. Khachatrian, The complete non-trivial-intersection theorem for systems of finite sets, J. Combin. Theory Ser. A 76 (1996) 121--138.
		%B
		\bibitem{VHM1} A. Blokhuis, A.E. Brouwer, A. Chowdhury, P. Frankl, T. Mussche, B. Patk\'{o}s and T. Sz\H{o}nyi, A Hilton-Milner Theorem for Vector Spaces, Electronic J. combin. 17 (2010) \#R71.
		\bibitem{T1} A.E. Brouwer, A.M. Cohen and A. Neumaier, Distance-Regular Graphs, Springer-Verlag, Berlin 1989.
		%C
		\bibitem{LSET2} M. Cao, B. Lv and K. Wang, The structure of large non-trivial $t$-intersecting families for finite sets, arXiv: 2008.10784v1.
		\bibitem{VHM2} M. Cao, B. Lv, K. Wang and S. Zhou, non-trivial $t$-intersecting families for vector spaces, arXiv: 2007.11767v1.
		%E
		\bibitem{EKR}P. Erd\H{o}s, C. Ko, and R. Rado, Intersection theorems for systems of finite sets, Quart. J. Math. Oxford Ser. 2 (12) (1961) 313--320.
		%F
		\bibitem{Fn}P. Frankl, The Erd\H{o}s-Ko-Rado theorem is true for $n=ckt$, in: Combinatorics, vol. \uppercase\expandafter{\romannumeral1}, Proc. Fifth Hungarian Colloq., Keszthely, 1976, in: Colloq. Math. Soc. J\'{a}nos Bolyai, vol. 18, North-Holland, 1978, pp. 365--375.
		\bibitem{VEKR2} P. Frankl and R. Wilson, The Erd\H{o}s-Ko-Rado theorem for vector spaces, J. Combin. Theory, Ser. A 43(2) (1986) 228--236.
		%G
		\bibitem{QY} J. Guo, F. Li and K. Wang, Anzahl formulas of subspaces in symplectic spaces and their applications, Linear Algebra and its Applications, 438 (2013) 3321--3335.
		%H
		\bibitem{LSET3}J. Han and Y. Kohayakawa, The maximum size of a non-trivial intersecting uniform family that is not a subfamily of the Hilton-Milner family, Proc. Amer. Math. Soc. 145(1) (2017) 73--87.
		\bibitem{SHM} A. Hilton and E. Milner, Some intersection theorems for systems of finite sets, Quart. J. Math. Oxford Ser. (2) 18 (1967) 369--384.
		\bibitem{CPS} J. Hirschfeld and J. Thas, General Galois Geometries, Oxford Mathematical Monographs, Oxford Science
		Publications, The Clarendon Press, Oxford University Press, New York 1991.
		\bibitem{VEKR1} W.N. Hsieh, Intersection theorems for systems of finite vector spaces, Discrete Math. 12(1) (1975) 1--16.
		%
		\bibitem{LSET1}A. Kostochka and D. Mubayi, The structure of large intersecting families, Proc. Amer. Math. Soc. 145 (6) (2017) 2311--2321.
		%L
		\bibitem{SPEKRT} X. Liu, Q. Fan and Q. Sun, Research of the Erd\H{o}s-Ko-Rado Theorem Based on Symplectic Spaces over Finite Fields, Journal of Heibei Normal University (Natural Science Edition) 42(4) (2018) 277--283.
		%M
		\bibitem{SPEKR2} K. Metsch, An Erd\H{o}s-Ko-Rado for finite classical polar spaces, J. Algebr. Comb. 43 (2016) 375--397.
		%P
		\bibitem{SPEKR1} V. Pepe, L. Storme and F. Vanhove, Theorems of Erd\H{o}s-Ko-Rado type in polar spaces, J. Combin. Theory Ser. A 118(4) (2011) 1291--1312.
		%T
		\bibitem{VEKR3} H. Tanaka, Classification of subsets with minimal width and dual width in Grassmann, bilinear forms and dual polar graphs, J. Combin. Theory Ser. A 113 (2006) 903--910.
		%W
		\bibitem{JS} Z. Wan, Geometry of Classical Groups over Finite Fields, 2nd edition, Science Press, Beijing/New York 2002.
		\bibitem{Wn}R.M. Wilson, The exact bound in the Erd\H{o}s-Ko-Rado theorem, Combinatorica 4 (1984) 247--257.
	\end{thebibliography}
\end{document}